\font\tencmmib=cmmib10 \skewchar\tencmmib '60
\def\lessim{\ \lower4pt\hbox{$
\buildrel{\displaystyle <}\over\sim$}\ }
\def\gessim{\ \lower4pt\hbox{$\buildrel{\displaystyle >}
\over\sim$}\ }
\newcommand{\e}{\mathbb{E}}
\newcommand{\p}{\mathbb{P}}
\newtheorem{lemma}{\bf Lemma}
\newtheorem{theorem}{\bf Theorem}
\newtheorem{corollary}{\bf Corollary}
\newtheorem{example}{\bf Example}
\newtheorem{proposition}{\bf Proposition}
\newenvironment{Proof of lemma}{\noindent{\bf Proof of Lemma}}{\hfill$\Box$\newline}
\newenvironment{Proof of theorem}{\noindent{\bf Proof of Theorem}}{\hfill{\footnotesize${\square}$}\newline}
\newenvironment{Proof of theorems}{\noindent{\bf Proof of Theorems}}{\hfill$\Box$\newline}
\newenvironment{Proof of proposition}{\noindent{\bf Proof of Proposition}}{\hfill$\Box$\newline}
\newenvironment{Proof of propositions}{\noindent{\bf Proof of Propositions}}{\hfill$\Box$\newline}
\newenvironment{Proof of exercise}{\noindent{\it Proof of Exercise:}}{\hfill$\Box$}
\font\tencmmib=cmmib10 \skewchar\tencmmib '60
\def\lessim{\ \lower4pt\hbox{$
\buildrel{\displaystyle <}\over\sim$}\ }
\def\gessim{\ \lower4pt\hbox{$\buildrel{\displaystyle >}
\over\sim$}\ }
\def\go0{\to 0}
\def\leftitem#1{\item{\hbox to\parindent{\enspace#1\hfill}}}
\def\sg{\sigma}
\def\sg2{\sigma^2}
\def\__{_{\infty}}
\begin{document}

\title{Partial results on the convexity of the Parisi functional with PDE approach }
\author{Wei-Kuo Chen\footnote{Department of Mathematics, University of Chicago, email: wkchen@math.uchicago.edu}}

\maketitle

\begin{abstract}
We investigate the convexity problem for the Parisi functional defined on the space of the so-called functional ordered parameters in the Sherrington-Kirkpatrick model. In the recent work of Panchenko \cite{P08}, he proved that this functional is convex along one-sided directions with a probabilistic method. In this paper, we will study this problem with a PDE approach that simplifies his original proof and presents more general results. 
\end{abstract}

\section{Introduction and main results}
The Sherrington-Kirkpatrick (SK) model was introduced in \cite{SK75} with the aim of explaining the strange magnetic behaviors of certain alloys. In the past decades, it has been intensively studied in physics, see \cite{MPV}. One of the most beautiful discoveries was the famous Parisi formula, which states the thermodynamic limit of the free energy in the SK model with inverse temperature $\beta>0$ and external field $h\in\mathbb{R}$ can be computed through a variational problem over the space of functional ordered parameters $\mathcal{M}$,
\begin{align}
\label{pf}
\inf_{a\in\mathcal{M}}\left(\log 2+F_{a}(\beta h,1)-\frac{\beta^2}{2}\int_0^1ta(1-t)dt\right),
\end{align}
where $\mathcal{M}$ is defined as the collection of all nonincreasing and left-continuous functions from $[0,1]$ to $[0,1]$ and $F_a$ is the solution to the nonlinear parabolic PDE associated with $a\in\mathcal{M}$, 
\begin{align}
\begin{split}
\label{pde1}
\partial_tF_{a}(x,t)&=\frac{1}{2}\left(\partial_{xx}F_{a}(x,t)+a(t)(\partial_xF_{a}(x,t))^2\right),\,\,(x,t)\in\mathbb{R}\times[0,1],
\end{split}\\
\begin{split}
\label{ic}
F_a(x,0)&=\log \cosh(x),\,\,x\in\mathbb{R}.
\end{split}
\end{align}
Note that if $a$ is a step function, $F_a$ can be precisely solved by performing the Hopf-Cole transformation, while for general $a$, the existence of $F_a$ is assured via an approximation argument using step functions and the uniform $L^1$-Lipschitz property of $a\mapsto F_a(x,t)$ over arbitrary $(x,t)$, see \cite{Guerra03}. The first mathematically rigorous proof for the Parisi formula was presented in the seminal work of Talagrand \cite{Tal06}. Later, its validity in the mixed $p$-spin model was established by Panchenko \cite{P12}. 


\smallskip

It has been conjectured that the Parisi formula \eqref{pf} has a unique minimizer. In physicists' picture, such minimizer is encoded with all information that are needed to completely describe the model. As the third term in the bracket of \eqref{pf} is linear in the functional ordered parameter, the uniqueness will follow if one could show that $a\mapsto F_a(\beta h,1)$ defines a convex functional on $\mathcal{M}$. More generally, let $\phi$ be a twice differentiable function on $\mathbb{R}$ with
\begin{align}\label{eq-9}
\|\phi'\|_\infty\,\,\mbox{and}\,\,\|\phi''\|_\infty<\infty.
\end{align} 
These assumptions guarantee the existence of the solution $F_{\phi,a}$ corresponding to the PDE \eqref{pde1} and step-like $a\in\mathcal{M}$ with a new initial condition $\phi$ in \eqref{ic}. They also allow us to adapt a similar argument as Proposition 3.1 in \cite{Tal07} to obtain the uniform $L^1$-Lipschitz property of $a\mapsto F_{\phi,a}(x,t)$ on all step-like $a\in\mathcal{M}$ over $(x,t)$. So the existence of $F_{\phi,a}$ for arbitrary $a\in\mathcal{M}$ is ensured. Now for any $x,$ define the Parisi functional $P_{\phi,x}$ on $\mathcal{M}$ as $P_{\phi,x}(a):=F_a(x,1)$ for $a\in\mathcal{M}$. Note that $\log \cosh x$ is an even convex function and satisfies \eqref{eq-9}. Numerical simulation suggests
\smallskip
\smallskip

\noindent{\bf Conjecture.} {\it The Parisi functional $P_{\phi,x}$ is convex on $\mathcal{M}$ if $\phi$ is even convex.}

\smallskip
\smallskip
\noindent 
The first related result about this problem was presented in Panchenko \cite{P08}, where using a probabilistic argument, he showed the convexity along one-sided directions, that is, 
\begin{align}
\label{pan}
P_{\phi,x}(\alpha a_1+(1-\alpha)a_2)\leq \alpha P_{\phi,x}(a_1)+(1-\alpha)P_{\phi,x}(a_2),\,\,\forall \alpha\in[0,1]
\end{align}
for all $a_1,a_2\in\mathcal{M}$ with $a_1\leq a_2$. In this paper, we will study the above conjecture via a maximum principle for the nonlinear parabolic PDE \eqref{pde1}. Although at this point we still have not been able to figure out how to use the present method to give a complete answer, it provides a new way of looking at the convexity problem that simplifies Panchenko's original argument and leads to more general results. Incidentally, it is of independent interest that the numerical evidence seems to support that the conclusion of Conjecture also holds true when $\phi$ is nondecreasing. As one shall see, our method can be as well applied to this case and deduce similar convexity along one-sided directions. 

\smallskip

We now state the main results. Let $\mathcal{C}$ be the collections of all twice differentiable $\phi$ on $\mathbb{R}$ satisfying \eqref{eq-9}. Set function spaces
\begin{align*}
\mathcal{F}_1&=\{(\phi_1,\phi_2)|\mbox{$\phi_1,\phi_2\in \mathcal{C}$ even convex with $\phi_1\leq \phi_2$ and $\phi_1'\leq \phi_2'$}\},\\
\mathcal{F}_2&=\{(\phi_1,\phi_2)|\mbox{$\phi_1,\phi_2\in\mathcal{C}$ nondecreasing convex with $\phi_1\leq \phi_2$}\\
&\qquad\qquad\qquad\mbox{and $\phi_1'(x)\leq \phi_2'(x)$ $\forall x\geq 0$}\},\\
\mathcal{F}&=\mathcal{F}_1\cup \mathcal{F}_2.
\end{align*}

\begin{theorem}
\label{thm2}
Suppose that $(\phi_1,\phi_2)\in\mathcal{F}.$ If $a_1,a_2\in\mathcal{M}$ with $a_1\leq a_2,$ then 
\begin{align}
\label{thm2:eq1}
P_{\phi,x}(a)\leq \alpha P_{\phi_1,x}(a_1)+(1-\alpha)P_{\phi_2,x}(a_2)
\end{align}
for all $\alpha\in[0,1]$ and $x\in\mathbb{R},$
where $\phi:=\alpha\phi_1+(1-\alpha)\phi_2$ and $a:=\alpha a_1+(1-\alpha)a_2.$
\end{theorem}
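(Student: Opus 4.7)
The plan is to apply a comparison principle for the quasilinear equation (\ref{pde1}) to the candidate supersolution
\[
W(x,t) \,:=\, \alpha F_{\phi_1,a_1}(x,t) + (1-\alpha)F_{\phi_2,a_2}(x,t).
\]
Because $W(x,0) = \alpha\phi_1(x)+(1-\alpha)\phi_2(x) = \phi(x) = F_{\phi,a}(x,0)$, the inequality \eqref{thm2:eq1} at $t=1$ is equivalent to $W \geq F_{\phi,a}$ on $\mathbb{R}\times[0,1]$. First I would reduce to step-like $a_1,a_2$, so that all derivatives exist classically, and then pass to general $a_i\in\mathcal{M}$ using the uniform $L^1$-Lipschitz property of $a\mapsto F_{\phi,a}$ recalled in the introduction.

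Writing $u_i := \partial_x F_{\phi_i,a_i}$, substituting the PDEs for the $F_{\phi_i,a_i}$ and using the Jensen-type identity $\alpha u_1^2+(1-\alpha)u_2^2-(\alpha u_1+(1-\alpha)u_2)^2 = \alpha(1-\alpha)(u_2-u_1)^2$ together with $a = \alpha a_1+(1-\alpha)a_2$ yields
\[
\partial_t W - \tfrac{1}{2}\partial_{xx}W - \tfrac{a(t)}{2}(\partial_x W)^2 \,=\, \tfrac{\alpha(1-\alpha)}{2}\bigl[(a_2-a_1)(u_2^2-u_1^2)+a(u_2-u_1)^2\bigr] \,=:\, R_0.
\]
If $R_0\geq 0$ on $\mathbb{R}\times[0,1]$, the linearization
\[
\partial_t(W-F_{\phi,a}) - \tfrac{1}{2}\partial_{xx}(W-F_{\phi,a}) - \tfrac{a}{2}(\partial_x W+\partial_x F_{\phi,a})\,\partial_x(W-F_{\phi,a}) \,=\, R_0 \geq 0,
\]
combined with $W(\cdot,0)=F_{\phi,a}(\cdot,0)$, gives $W\geq F_{\phi,a}$ by the maximum principle. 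A sufficient condition for $R_0\geq 0$ is the pointwise comparison $|u_1|\leq|u_2|$, since $a_2-a_1\geq 0$ and $a\geq 0$ then make both summands non-negative.

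The main work is to extract this sign comparison from $(\phi_1,\phi_2)\in\mathcal{F}$. Three structural features of (\ref{pde1}) are preserved by the flow, each verified by a routine maximum-principle argument on $F_{\phi_i,a_i}$, on $u_i$, or on $\partial_x u_i$: convexity of $\phi_i$ gives $\partial_x u_i\geq 0$; evenness of $\phi_i$ (case $\mathcal{F}_1$) gives oddness of $u_i$ and hence the Dirichlet value $u_i(0,t)=0$; and monotonicity $\phi_i'\geq 0$ (case $\mathcal{F}_2$) gives $u_i\geq 0$ together with $u_i(x,t)\to 0$ as $x\to-\infty$. The difference $H:=u_2-u_1$ then satisfies the linearized Burgers-type equation
\[
\partial_t H - \tfrac{1}{2}\partial_{xx}H - a_1 u_1\,\partial_x H - a_2(\partial_x u_2)\,H \,=\, (a_2-a_1)\,u_1\,\partial_x u_2,
\]
whose source is non-negative wherever $u_1\geq 0$ (all of $\mathbb{R}$ in $\mathcal{F}_2$, all of $[0,\infty)$ in $\mathcal{F}_1$). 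In case $\mathcal{F}_1$, the Dirichlet boundary $H(0,t)=0$, the initial data $\phi_2'-\phi_1'\geq 0$ on $[0,\infty)$, and this non-negative source let the half-line maximum principle deliver $H\geq 0$ on $[0,\infty)\times[0,1]$; oddness then promotes this to $|u_1|\leq|u_2|$ on $\mathbb{R}\times[0,1]$, giving $R_0\geq 0$ and closing the proof.

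The hardest step will be the analogous argument in case $\mathcal{F}_2$: the hypothesis compares $\phi_1'$ and $\phi_2'$ only on $[0,\infty)$, and one can easily produce pairs $(\phi_1,\phi_2)\in\mathcal{F}_2$ with $\phi_1'(x)>\phi_2'(x)$ at some $x<0$, so the global comparison $u_1\leq u_2$ is not a direct consequence of the initial data. Moreover, the parabolic diffusion in (\ref{pde1}) couples the two half-lines, so the failure of the comparison on $(-\infty,0)$ at $t=0$ can leak into $[0,\infty)$ at $t>0$ and spoil the half-line argument on the positive axis as well. To close case $\mathcal{F}_2$ I would try to exploit the preserved monotonicity $\partial_x u_i\geq 0$ and the boundary decay $u_i(x,t)\to 0$ as $x\to-\infty$, perhaps together with an auxiliary shift or weighted maximum-principle argument, so as to control the sign of $R_0$ globally. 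This is precisely the step where the reflection symmetry that handles $\mathcal{F}_1$ so cleanly is unavailable, and it is the one I expect to require the most care.
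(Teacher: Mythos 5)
Your core algebra is correct and your route is genuinely different from the paper's: you run a single comparison-principle argument on the time-dependent quasilinear equation \eqref{pde1}, whereas the paper first discretizes $a_1,a_2$ into constant pieces, represents each piece by the Hopf--Cole formula \eqref{eq2}, and then proves the one-step inequality (Proposition \ref{thm1}) by a maximum principle for the \emph{autonomous} problem plus Gaussian covariance/FKG inequalities (Lemma \ref{lem2}). Your identity for $R_0$ is exactly the (negative of the) paper's quantity $\Delta_2$ in \eqref{eq.3}, and your sufficient condition $|u_1|\le|u_2|$ is precisely the gradient comparison that the paper's \eqref{thm1:eq2} is designed to propagate; your linearized equation for $H=u_2-u_1$ is a clean PDE substitute for the paper's Lemma \ref{lem2}(iii). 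If completed, your argument would also bypass the paper's Lemmas \ref{lem-1} and \ref{lem--2}, since $W-F_{\phi,a}$ is bounded with bounded derivatives and a Phragm\'en--Lindel\"of version of the maximum principle applies on the strip without any linear-at-infinity approximation.

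The genuine gap is that you do not close the nondecreasing case, and you say so. But the obstruction you describe is an artifact of a typo in the displayed definitions of $\mathcal{F}_1$ and $\mathcal{F}_2$: as written, the \emph{even} class carries the global condition $\phi_1'\le\phi_2'$ (which for two even functions forces $\phi_1'=\phi_2'$ by oddness of the derivatives, so that class would be degenerate), while the \emph{nondecreasing} class carries the condition only on $[0,\infty)$. The paper's own proofs consistently use the opposite pairing: for the nondecreasing class the hypothesis is $\phi_1'\le\phi_2'$ on all of $\mathbb{R}$ (this is what makes $f_2=m_2\phi_2-m_1\phi_1$ nondecreasing in the proof of \eqref{thm1:eq2}), and the half-line-only comparison belongs with the even class, where oddness recovers the global statement $|u_1|\le|u_2|$ exactly as you do. Under the intended hypothesis your own machinery closes the nondecreasing case at once: $u_1\ge0$ everywhere (preserved positivity of the gradient), so the source $(a_2-a_1)u_1\partial_xu_2$ is nonnegative on all of $\mathbb{R}\times[0,1]$, the initial datum $H(\cdot,0)=\phi_2'-\phi_1'$ is nonnegative everywhere, and the whole-line maximum principle gives $H\ge0$, hence $0\le u_1\le u_2$ and $R_0\ge0$. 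Two smaller repairs: the zeroth-order coefficient $a_2\partial_xu_2\ge0$ in your $H$-equation has the wrong sign for a naive minimum principle, so you must substitute $H=e^{\lambda t}\tilde H$ with $\lambda\ge\sup a_2\partial_xu_2$ (finite by \eqref{eq-9} and \eqref{proof:eq2}); and the claimed decay $u_i(x,t)\to0$ as $x\to-\infty$ for nondecreasing $\phi_i$ is false in general (take $\phi_i(x)=x$), though it is not needed once the global argument above is in place.
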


Letting $(\phi_1,\phi_2)\in\mathcal{F}$ with $\phi_1=\phi_2$, we obtain the convexity along the one-sided directions. 

\begin{corollary}
If $\phi\in\mathcal{C}$ is either nondecreasing convex or even convex, then \eqref{pan} holds.
\end{corollary}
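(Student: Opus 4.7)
The plan is to apply a parabolic maximum principle to the difference
\[
G(x,t) := F_{\phi,a}(x,t) - \alpha F_{\phi_1,a_1}(x,t) - (1-\alpha) F_{\phi_2,a_2}(x,t)
\]
on $\mathbb{R}\times[0,1]$. Because $\phi=\alpha\phi_1+(1-\alpha)\phi_2$, one has $G(\cdot,0)\equiv 0$, so it suffices to show $G\leq 0$ on $\mathbb{R}\times[0,1]$ in order to deduce \eqref{thm2:eq1} at $t=1$. Setting $u:=\partial_x F_{\phi,a}$, $u_i:=\partial_x F_{\phi_i,a_i}$ and $\bar u:=\alpha u_1+(1-\alpha)u_2$, so that $\partial_x G=u-\bar u$, I would differentiate $G$ in $t$ via \eqref{pde1}, substitute $u=\partial_x G+\bar u$ in the nonlinear term, and invoke the weighted-variance identity
\[
a\bar u^2 - \alpha a_1 u_1^2 - (1-\alpha)a_2 u_2^2 = -\alpha(1-\alpha)\Delta, \quad \Delta := (a_2-a_1)(u_2^2-u_1^2) + a(u_2-u_1)^2,
\]
to arrive at
\[
\partial_t G = \tfrac{1}{2}\partial_{xx}G + a\bar u\,\partial_x G + \tfrac{a}{2}(\partial_x G)^2 - \tfrac{\alpha(1-\alpha)}{2}\Delta.
\]
Since $a_2\geq a_1$ and $a\geq 0$, a pointwise comparison $u_1^2\leq u_2^2$ would yield $\Delta\geq 0$, reducing the above to a parabolic inequality $\partial_t G\leq \tfrac{1}{2}\partial_{xx}G+b\,\partial_x G$ with drift $b:=a\bar u+\tfrac{a}{2}\partial_x G$ bounded on $\mathbb{R}\times[0,1]$ thanks to \eqref{eq-9}; the standard maximum principle for bounded solutions, together with $G(\cdot,0)\equiv 0$, then gives $G\leq 0$ throughout.

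The technical crux is thus the pointwise comparison $u_1^2\leq u_2^2$. Differentiating \eqref{pde1} once and twice in $x$ produces parabolic equations for $u_i$ and $v_i:=\partial_{xx}F_{\phi_i,a_i}$, and I would apply the max principle to these to establish the preserved structural properties of interest: convexity ($v_i\geq 0$, so $u_i(\cdot,t)$ is nondecreasing in $x$) propagates from $\phi_i''\geq 0$; nonnegativity ($u_i\geq 0$) propagates from $\phi_i'\geq 0$ in the $\mathcal{F}_2$ case; and oddness of $u_i(\cdot,t)$ propagates from even $\phi_i$ in the $\mathcal{F}_1$ case by the symmetry of \eqref{pde1}. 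For $\mathcal{F}_1$, oddness of $u_i$ reduces $u_1^2\leq u_2^2$ to a half-line comparison $u_1\leq u_2$ on $[0,\infty)$, for which the hypothesis $\phi_1'\leq\phi_2'$ provides the initial data. For $\mathcal{F}_2$, a difference argument for $w:=u_2-u_1$, which satisfies a linear parabolic equation with nonnegative source $(a_2-a_1)u_2\,\partial_x u_2\geq 0$ by the preserved monotonicity and nonnegativity of $u_2$, should then be combined with the half-line comparison $\phi_1'\leq\phi_2'$ on $[0,\infty)$ to yield $u_1\leq u_2$.

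The main obstacle is precisely this last step: transferring the half-line initial-data inequality $\phi_1'\leq\phi_2'$ on $[0,\infty)$ to a global comparison of $u_1,u_2$ on $\mathbb{R}\times[0,1]$ in the $\mathcal{F}_2$ setting. A direct max principle on $w$ over $\mathbb{R}$ requires a global initial inequality, which is unavailable, so one must exploit the monotonicity $\partial_x u_i\geq 0$ and nonnegativity $u_i\geq 0$ in tandem to rule out a sign change on $(-\infty,0)$. This is the delicate point where the PDE argument parts ways with Panchenko's probabilistic one and where the structural distinctions between $\mathcal{F}_1$ and $\mathcal{F}_2$ must be carefully handled.
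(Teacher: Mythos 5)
First, note that in the paper this corollary is a one-line consequence of Theorem \ref{thm2}: take $\phi_1=\phi_2=\phi$ in $\mathcal{F}$. Your proposal instead re-derives the special case from scratch, and its skeleton is in fact the same as the paper's proof of Proposition \ref{thm1}: the paper also sets $F=F_0-\alpha F_1-(1-\alpha)F_2$, applies a tailor-made maximum principle (Lemma \ref{max}), and reduces everything to the sign of the quadratic form $n(\alpha\partial_xF_1+(1-\alpha)\partial_xF_2)^2-\alpha m_1(\partial_xF_1)^2-(1-\alpha)m_2(\partial_xF_2)^2$, which is controlled exactly by the gradient comparison $|\partial_xF_1|\leq|\partial_xF_2|$ of \eqref{thm1:eq2}. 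Your weighted-variance identity is an equivalent repackaging of the paper's factorization \eqref{eq.3}. The genuine differences are in how the two remaining ingredients are supplied, and that is where your argument has gaps.

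The first gap is the one you name yourself: the pointwise comparison $u_1^2\leq u_2^2$ is not proved. You sketch a route via differentiated parabolic equations for $u_i$, $\partial_xu_i$ and $w=u_2-u_1$, which does close the nondecreasing case (source $(a_2-a_1)u_2\partial_xu_2\geq 0$ and $w(\cdot,0)\geq 0$ globally), but you declare the other case an unresolved ``obstacle.'' It is resolvable — in the even case $u_1,u_2$ are odd in $x$, so $w(0,t)=0$ and the maximum principle can be run on the half-strip $[0,\infty)\times[0,1]$, where the source is nonnegative; oddness then transfers the conclusion to $x\leq 0$ as $u_2\leq u_1\leq 0$, and $u_1^2\leq u_2^2$ follows — but as written your proof is incomplete at its own ``technical crux.'' (The paper avoids PDE comparison here entirely: it proves \eqref{thm1:eq2} by the explicit Gaussian representations \eqref{proof:eq1}--\eqref{proof:eq2} and the FKG-type covariance inequalities of Lemma \ref{lem2}, iterated across the jump times of the step functions; this also sidesteps the regularity needed to differentiate \eqref{pde1} twice in $x$ and the fact that $F_{\phi,a}$ is not $C^1$ in $t$ at the jumps of $a$.) The second gap is the behavior at spatial infinity. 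You invoke ``the standard maximum principle for bounded solutions,'' but $G$ is not bounded in general: each $F_{\phi_l,a_l}(\cdot,t)$ is only uniformly Lipschitz by \eqref{eq-9}, so $G$ has linear growth, and for a general convex $\phi\in\mathcal{C}$ the functions $F_l$ need not have a common affine asymptote that would make $G$ bounded. This is precisely why the paper approximates $\phi$ by functions that are linear outside a compact set (Lemma \ref{lem-1}), computes the asymptotics via Lemma \ref{lem--2}, and verifies condition \eqref{lem5:eq1} by the sign analysis in \eqref{eq.1}--\eqref{eq.2}. Your step can be repaired by citing a Phragm\'en--Lindel\"of maximum principle valid for subsolutions of at most polynomial growth with bounded drift, but the justification you give does not cover the situation at hand.
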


\begin{example}
\rm Let $\phi$ be either nondecreasing convex or even convex. For any given $m\in [0,1],$ if $a(t)=m$ for all $0\leq t\leq 1,$ then $P_{\phi,x}(a)=m^{-1}\log \e\exp m\phi(x+z)$, where $z$ is a standard Gaussian random variable. Thus, Corollary 1 implies that $m\mapsto m^{-1}\log \e\exp m\phi(x+z)$ defines a convex function on $[0,1].$ Let us emphasize that this is different from the usual log-convexity of the $L^m$-norm in $1/m.$ 
\end{example}

\section{Proofs}

Throughout this paper, we denote by $z$ the standard Gaussian random variable and set $z_{x,t}=x+\sqrt{t}z$ for $(x,t)\in\mathbb{R}\times[0,1].$
For a given $\phi\in\mathcal{C}$ and a number $0\leq m\leq 1$, we will simply use $F_{\phi,m}$ to denote $F_{\phi,a}$ if $a\in\mathcal{M}$ is identically equal to $m.$ In this case, $F_{\phi,m}$ can be explicitly written as
\begin{align}\label{eq1}
F_{\phi,m}(x,t)=\frac{1}{m}\log \e\exp m\phi(z_{x,t})=\frac{1}{m}\log \e\exp m\phi(x+\sqrt{t}z).
\end{align}
The central rhythm of the proof for Theorem \ref{thm2} is played by the following proposition.

\begin{proposition}\label{thm1}
Suppose that $(\phi_1,\phi_2)\in \mathcal{F}.$ Let $0<m_1\leq m_2$ and $\alpha\in[0,1]$. Set 
\begin{align*}
n&=\alpha m_1+(1-\alpha)m_2,\\
\phi&=\alpha \phi_1+(1-\alpha)\phi_2.
\end{align*}
Then we have
\begin{align}\label{thm1:eq2}
\forall t\in[0,1],\,\,(F_{\phi_1,m_1}(\cdot,t),F_{\phi_2,m_2}(\cdot,t))\in \mathcal{F}_i\,\,\mbox{if $(\phi_1,\phi_2)\in \mathcal{F}_i$}
\end{align}
for $i=1,2$ and  
\begin{align}
\begin{split}\label{thm1:eq1}
F_{\phi,n}(x,t)&\leq \alpha F_{\phi_1,m_1}(x,t)+(1-\alpha)F_{\phi_2,m_2}(x,t)
\end{split}
\end{align}
for all $(x,t)\in\mathbb{R}\times[0,1].$ 
\end{proposition}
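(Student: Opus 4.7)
The plan is to use a maximum-principle argument on the PDE \eqref{pde1} for the defect
\[
G(x,t) := \alpha F_{\phi_1,m_1}(x,t) + (1-\alpha) F_{\phi_2,m_2}(x,t) - F_{\phi,n}(x,t),
\]
which vanishes at $t=0$ since $\phi = \alpha\phi_1+(1-\alpha)\phi_2$; nonnegativity of $G$ on $\mathbb{R}\times[0,1]$ is precisely the inequality \eqref{thm1:eq1}. Writing $f_i := \partial_x F_{\phi_i,m_i}$ and $\tilde f := \alpha f_1 + (1-\alpha)f_2$, a direct computation from \eqref{pde1} gives
\[
\partial_t G = \tfrac12\partial_{xx}G + n\tilde f\,\partial_x G - \tfrac{n}{2}(\partial_x G)^2 + \tfrac12 Q,
\qquad Q := \alpha m_1 f_1^2 + (1-\alpha)m_2 f_2^2 - n\tilde f^2.
\]
With $A := (1+\alpha)m_1-\alpha m_2$ and $B := (2-\alpha)m_2-(1-\alpha)m_1$ (so that $A+B = 2n$, $B-A = 2(m_2-m_1)\ge 0$, and $B\ge m_1>0$), a short algebraic manipulation yields the factorization
\[
Q \,=\, \alpha(1-\alpha)(f_2-f_1)\bigl(B f_2 - A f_1\bigr).
\]

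I would first establish the propagation claim \eqref{thm1:eq2} via standard PDE ingredients. Evenness and nondecreasingness of $F_{\phi,m}(\cdot,t)$ follow immediately from \eqref{eq1} and the symmetry of $z$; convexity is obtained by applying the maximum principle to $V := \partial_{xx}F_{\phi,m}$, which satisfies a linear parabolic equation with nonnegative source $mV^2$ and initial data $\phi''\ge 0$; and the comparison $F_{\phi_1,m_1}(\cdot,t)\le F_{\phi_2,m_2}(\cdot,t)$ follows by combining $\phi_1\le\phi_2$ with the classical monotonicity of $m\mapsto m^{-1}\log\mathbb{E}e^{mX}$. The derivative ordering propagates through
\[
\partial_t H = \tfrac12\partial_{xx}H + m_1 f_2\,\partial_x H + m_1(\partial_x f_1)\,H + (m_2-m_1) f_2\,(\partial_x f_2), \qquad H := f_2-f_1,
\]
whose source is nonnegative wherever $f_2\ge 0$ (using the already propagated convexity $\partial_x f_2\ge 0$). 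With propagation secured, $Q\ge 0$ is verified by cases: in $\mathcal{F}_1$, on $\{x\ge 0\}$ the propagated ordering gives $0\le f_1\le f_2$, so $Bf_2-Af_1 \ge (B-A)f_1 = 2(m_2-m_1)f_1\ge 0$, and both factors of the factored $Q$ are nonnegative; on $\{x\le 0\}$ one has $f_2\le f_1\le 0$ and both factors flip sign together, so again $Q\ge 0$. In $\mathcal{F}_2$ the same bound handles $\{x\ge 0\}$. Once $Q\ge 0$ is available pointwise, the equation for $G$ reads
\[
\partial_t G - \tfrac12\partial_{xx}G - \bigl[n\tilde f - \tfrac{n}{2}\partial_x G\bigr]\partial_x G = \tfrac12 Q \ge 0,
\]
with coefficients bounded on $\mathbb{R}\times[0,1]$ (since $|\partial_x F_{\phi,m}|\le\|\phi'\|_\infty$ by \eqref{eq-9}) and $G$ itself bounded, so the standard parabolic maximum principle yields $G\ge 0$.

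The step I expect to require the most care is the $\mathcal{F}_2$ case on $\{x<0\}$, where the hypothesis $\phi_1'\le\phi_2'$ is only imposed on $[0,\infty)$ and the propagation above does not automatically extend the ordering $f_1\le f_2$ past $x=0$. The factorization of $Q$ shows that its sign on the negative half-line is not controlled by the hypothesis alone, so closing this gap will demand either sharpening the derivative-comparison propagation (perhaps by exploiting the monotonicity of $\phi_i$ to compare the tilted Gibbs measures underlying $f_i$ in the spirit of an FKG-type argument on the representation \eqref{eq1}), or an alternative rearrangement of the source term that absorbs the potentially negative contributions into the drift of the PDE for $G$.
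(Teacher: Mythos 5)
Your strategy for \eqref{thm1:eq1} coincides with the paper's: both apply a parabolic maximum principle to the defect (the paper works with $F=-G$), and the sign of the source reduces to the same algebra --- your $Q=\alpha(1-\alpha)(f_2-f_1)(Bf_2-Af_1)$ is exactly the paper's $-\Delta_2=-(f_1-f_2)(c_1f_1-c_2f_2)$ with $c_1=-\alpha(1-\alpha)A$ and $c_2=-\alpha(1-\alpha)B$, and the case analysis via $0\le f_1\le f_2$ on the appropriate half-lines is identical. You genuinely differ in two places. First, for the propagation claim \eqref{thm1:eq2} the paper never differentiates the PDE: it computes $\partial_xF_l$ and $\partial_{xx}F_l$ explicitly as Gaussian expectations against the Gibbs weight $W_l$, gets convexity from Jensen, and gets the derivative ordering from an interpolation $s\mapsto\Exval{\phi_1'(z_{x,t})W(s,x,z_{x,t})}$ combined with FKG-type covariance inequalities (Lemma 1). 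Your route through the equations for $V=\partial_{xx}F$ and $H=f_2-f_1$ is viable but needs (i) third $x$-derivatives of $F_l$ (available for $t>0$ by Gaussian smoothing, but this should be said, since $\phi_l$ is only assumed $C^2$), (ii) an exponential rescaling $e^{-\lambda t}H$ to neutralize the zeroth-order coefficient $m_1\partial_xf_1\ge0$ at a negative minimum, and (iii) in the even case, the boundary condition $H(0,t)=0$ (oddness of $f_1,f_2$) so the comparison runs on the half-strip $[0,\infty)\times[0,1]$ where the source is nonnegative. Second, at $|x|\to\infty$ you invoke the maximum principle for bounded solutions on the unbounded strip; this works ($G$ is bounded since $G(\cdot,0)=0$ and $\partial_tG$ is bounded under \eqref{eq-9}, and the coefficients are bounded), and it lets you bypass the paper's approximation of $\phi_l$ by asymptotically linear functions (Lemmas 3--4), which the paper needs only to verify the explicit decay condition \eqref{lem5:eq1} of its bare-hands maximum principle.

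The gap you flag for ``$\mathcal{F}_2$ on $\{x<0\}$'' is not a real obstruction: the displayed definitions of $\mathcal{F}_1$ and $\mathcal{F}_2$ have their derivative-ordering conditions interchanged relative to how they are used in every proof in the paper (as literally written, the even family with $\phi_1'\le\phi_2'$ on all of $\mathbb{R}$ is degenerate, since two odd functions ordered on both half-lines must coincide). The intended hypotheses are: for the nondecreasing family, $\phi_1'\le\phi_2'$ on all of $\mathbb{R}$; for the even family, only on $[0,\infty)$. With that assignment your own argument closes every case: in the nondecreasing family the source $(m_2-m_1)f_2\,\partial_xf_2$ of the $H$-equation is nonnegative on all of $\mathbb{R}$ and $H(\cdot,0)\ge0$ everywhere, so $0\le f_1\le f_2$ and hence $Q\ge0$ globally; in the even family the negative half-line is handled, as you already do, by oddness of $f_1,f_2$. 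No FKG-type rescue is needed.
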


\begin{proof}[Proof of Theorem \ref{thm2}] By the virtue of the uniform $L^1$-Lipschitz property of $P_{\phi,x}$ on $\mathcal{M}$ over $x$, it suffices to assume that $a_1,a_2$ are step functions. Furthermore, we may assume without loss of generality that they jump simultaneously at $\{t_j\}_{j=0}^k$ for some $k\geq 1$, where $0<t_j<t_{j+1}<1$ for all $1\leq j\leq k.$  Let $t_0=0$ and $t_{k+1}=1.$ For $0\leq j\leq k$, let $m_{l,j}=a_l(t_j)$ for $l=1,2$ and $n_j=a(t_j)=\alpha m_{1,j}+(1-\alpha)m_{2,j}.$ Using \eqref{eq1} and an induction argument on $j$, $F_{\phi_l,a_l}$ and $F_{\phi,a}$ can be solved explicitly as
\begin{align}
\begin{split}
\label{eq2}
F_{\phi_l,a_l}(x,t)&=\frac{1}{m_{l,j}}\log \e\exp m_{l,j} F_{\phi_l,a_l}(x+\sqrt{t-t_j}z,t_j),\\
F_{\phi,a}(x,t)&=\frac{1}{n_{j}}\log \e\exp n_{j} F_{\phi,a}(x+\sqrt{t-t_j}z,t_j),
\end{split}
\end{align} 
whenever $(x,t)\in\mathbb{R}\times(t_j,t_{j+1}]$ for $0\leq j\leq k.$ Suppose for the moment that there exists some $0\leq j\leq k$ such that
\begin{align}
\begin{split}\label{eq-1}
&(F_{\phi_1,a_1}(\cdot,t_j),F_{\phi_2,a_2}(\cdot,t_j))\in\mathcal{F}_i,
\end{split}\\
\begin{split}\label{eq-2}
&F_{\phi,a}(\cdot,t_j)\leq \alpha F_{\phi_1,a_1}(\cdot,t_j)+(1-\alpha)F_{\phi_2,a_2}(\cdot,t_j).
\end{split}
\end{align}
Note that $m_{1,j}\leq m_{2,j}$ for all $0\leq j\leq k+1.$
Keeping the iteration equations \eqref{eq2} in mind and using \eqref{eq-1}, we apply Proposition \ref{thm1} with $F_{\phi_1,a_1}(\cdot,t_j),$ $F_{\phi_2,a_2}(\cdot,t_j),$
$ m_{1,j},$ $m_{2,j}$ and  $\Delta t_j:=t_{j+1}-t_j$ to get by \eqref{thm1:eq2},
$$
(F_{\phi_1,a_1}(\cdot,t_{j+1}),F_{\phi_2,a_2}(\cdot,t_{j+1}))\in\mathcal{F}_i$$ 
and by \eqref{eq-2} and then \eqref{thm1:eq1},
\begin{align*}
F_{\phi,a}(x,t_{j+1})
&=\frac{1}{n_j}\log \e\exp n_jF_{\phi,a}(x+\sqrt{\Delta t_j}z,t_j)\\
&\leq\frac{1}{n_j}\log \e \exp n_j\left(\alpha F_{\phi_1,a_1}(x+\sqrt{\Delta t_j}z,t_j)\right.\\
&\qquad\left.+(1-\alpha)F_{\phi_2,a_2}(x+\sqrt{\Delta t_j}z,t_j)\right)\\
&\leq \frac{\alpha}{m_{1,j}}\log \e\exp m_jF_{\phi_1,a_1}(x+\sqrt{\Delta t_j}z,t_j)\\
&\qquad+\frac{1-\alpha}{m_{2,j}}\log \e\exp m_2F_{\phi_1,a_2}(x+\sqrt{\Delta t_j}z,t_j)\\
&=\alpha F_{\phi_1,a_1}(x,t_{j+1})+(1-\alpha)F_{\phi_2,a_2}(x,t_{j+1}).
\end{align*}
From this and Proposition \ref{thm1}, an induction argument leads to $$
F_{\phi,a}(\cdot,t_{k+1})\leq \alpha F_{\phi_1,a_1}(\cdot,t_{k+1})+(1-\alpha)F_{\phi_2,a_2}(\cdot,t_{k+1}),
$$
which gives \eqref{thm2:eq1} and completes our proof. 
\end{proof}

The rest of the paper is devoted to proving Proposition \ref{thm1} that will be divided into two parts. First, we prove \eqref{thm1:eq2}. We begin with a lemma below that gathers a few properties about the expectations for functions of Gaussian random variables as well as two covariance inequalities, the first is a special case of the FKG inequality and the second is taken from \cite{P08}.

\begin{lemma}\label{lem2} 
Suppose that $f,f_1,f_2$ are real-valued functions on $\mathbb{R}$ and $g$ is a centered Gaussian random varaible with $\e g^2=\sigma^2.$ 
\begin{itemize}
\item[$(i)$] If $f$ is even, then $\e f(x+g)$ is even in $x.$
\item[$(ii)$] If $f_1,f_2$ are odd with $f_1\leq f_2$ on $[0,\infty)$ then $\e f_1(x+g)\leq \e f_2(x+g)$ for all $x\geq 0.$
\item[$(iii)$] Let $W$ be a nonnegative function on $\mathbb{R}^2$ with $\e W(x,x+g)=1$ for any $x.$ If $f_1,f_2$ are nondecreasing, then for any $x,$
\begin{align}
\begin{split}\label{lem2:eq1}
&\e f_1(x+g)f_2(x+g)W(x,x+g)\\
&\geq \e f_1(x+g)W(x,x+g)\e f_2(x+g)W(x,x+g).
\end{split}
\end{align}
If
\begin{align}
\label{lem2:cond1}
\left\{
\begin{array}{l}
\mbox{$f_1$ is even with $f_1'(x)\geq 0$ for $x\geq 0$},\\
\mbox{$f_2$ is odd with $f_2'(x)\geq 0$ for $x\geq 0$},\\
\mbox{$W(x,y)$ is even in $y$,}
\end{array}
\right.
\end{align}
then \eqref{lem2:eq1} holds for any $x\geq 0.$
\end{itemize}
\end{lemma}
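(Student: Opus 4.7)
For Part (i), the substitution $g\mapsto -g$, which preserves the law of $g$, gives $\mathbb{E}f(-x+g)=\mathbb{E}f(-x-g)=\mathbb{E}f(x+g)$, the last equality by evenness of $f$. For Part (ii), I would set $F=f_2-f_1$, which is odd with $F\geq 0$ on $[0,\infty)$, and write $\mathbb{E}F(x+g)=\int F(u)p(u-x)\,du$ where $p$ is the Gaussian density. Splitting the integral at $u=0$ and sending $u\mapsto -u$ on the negative half (using oddness of $F$ and evenness of $p$) folds it onto
\[ \mathbb{E}F(x+g)=\int_0^\infty F(u)\bigl[p(u-x)-p(u+x)\bigr]\,du. \]
Since $|u-x|\leq |u+x|$ for $u,x\geq 0$ and $p$ is decreasing in $|y|$, the bracket is nonnegative, and $F\geq 0$ on $[0,\infty)$, giving $\mathbb{E}f_1(x+g)\leq \mathbb{E}f_2(x+g)$.

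For Part (iii), my starting point is the standard two-copy identity. With $g_1,g_2$ independent copies of $g$, the hypothesis $\mathbb{E}W(x,x+g)=1$ together with independence yields
\begin{align*}
&2\bigl(\mathbb{E}[f_1 f_2 W]-\mathbb{E}[f_1 W]\,\mathbb{E}[f_2 W]\bigr)\\
&\quad =\mathbb{E}\bigl[(f_1(x+g_1)-f_1(x+g_2))(f_2(x+g_1)-f_2(x+g_2))W(x,x+g_1)W(x,x+g_2)\bigr].
\end{align*}
When $f_1,f_2$ are both nondecreasing (the first case), the product of differences is pointwise $\geq 0$, $W\geq 0$, and \eqref{lem2:eq1} follows at once.

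The hard case is \eqref{lem2:cond1}, where $f_1$ is only even with $f_1'\geq 0$ on $[0,\infty)$, hence \emph{not} globally monotone, so the direct two-copy bound breaks. My plan is to exploit the even/odd/even structure of $(f_1,f_2,W)$ to fold all three integrals onto $[0,\infty)$. For $h$ with a given parity one has
\begin{align*}
\mathbb{E}[h(u)W(x,u)]&=\int_0^\infty h(u)W(x,u)[p(u-x)+p(u+x)]\,du\quad (h\text{ even}),\\
\mathbb{E}[h(u)W(x,u)]&=\int_0^\infty h(u)W(x,u)[p(u-x)-p(u+x)]\,du\quad (h\text{ odd}),
\end{align*}
with $u=x+g$. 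This suggests introducing the probability measure $d\mu(u)=W(x,u)[p(u-x)+p(u+x)]\,du$ on $[0,\infty)$ (total mass $1$ by the normalization) and the weight $r(u)=\tanh(ux/\sigma^2)$, which collapses the kernel ratio $[p(u-x)-p(u+x)]/[p(u-x)+p(u+x)]$. Applied to $h=f_1,\,f_2,\,f_1f_2$, the target inequality \eqref{lem2:eq1} rewrites as
\[ \int_0^\infty f_1(u)\,\bigl(f_2(u)r(u)\bigr)\,d\mu(u)\;\geq\;\int_0^\infty f_1\,d\mu\cdot\int_0^\infty f_2 r\,d\mu, \]
which is Chebyshev's sum inequality on $([0,\infty),\mu)$ applied to $f_1$ and $f_2 r$: both are nonnegative and nondecreasing on $[0,\infty)$, using $f_1',f_2'\geq 0$ there, $f_2(0)=0$, and monotonicity of $r$ for $x\geq 0$. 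Chebyshev itself is again a two-copy identity, now on the reduced probability space $([0,\infty),\mu)$. The main obstacle I expect is precisely spotting this reduction: recognizing that the odd-part/even-part ratio of the Gaussian kernels contracts to $\tanh(ux/\sigma^2)$, and that the evenness of $W$ in its second argument is exactly what makes $\mu$ a bona fide probability measure with $f_2 r$ inheriting monotonicity on $[0,\infty)$.
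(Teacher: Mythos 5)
Your proofs of (i), (ii), and the first assertion of (iii) coincide with the paper's: (i) is the same symmetry argument, (ii) is the same fold-onto-$[0,\infty)$ computation (your kernel $p(u-x)-p(u+x)$ is exactly the paper's $2\rho(u,x)\sinh(ux/\sigma^2)$), and the first case of (iii) is the same two-copy identity. For the second assertion of (iii) you take a genuinely different route, and it is correct. The paper stays inside the two-copy identity: it writes the covariance as an integral of $K(x,s,t)$ over $\{s\ge t\}\subset\mathbb{R}^2$, splits that region into $\{|s|\le|t|\}$ and $\{|s|\ge|t|\}$, and uses the reflection $(s,t)\mapsto(-t,-s)$ together with the parities of $f_1,f_2,W$ to pair the two pieces, reducing the problem to checking the signs of $K$ and of a difference of Gaussian weights on one region. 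You instead fold each of the three expectations onto $[0,\infty)$ first, observe that the even/odd kernel ratio contracts to $r(u)=\tanh(ux/\sigma^2)$, and recast \eqref{lem2:eq1} as Chebyshev's correlation inequality for the nondecreasing functions $f_1$ and $f_2r$ with respect to the probability measure $d\mu=W(x,u)[p(u-x)+p(u+x)]\,du$ on $[0,\infty)$ (total mass $1$ by the normalization of $W$). Your reduction is arguably cleaner: the failure of global monotonicity of $f_1$ disappears once restricted to $[0,\infty)$, the oddness of $f_2$ is converted into the monotone nonnegative weight $r$, and the delicate region-by-region sign bookkeeping of the paper's reflection argument is avoided; the one hypothesis you must (and do) use is $f_2(0)=0$ and $x\ge 0$ so that $f_2r$ is a product of nonnegative nondecreasing functions. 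The paper's argument, on the other hand, never leaves the single two-copy identity and so needs no auxiliary measure.
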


\begin{proof} Since $g$ and $-g$ have the same distribution, $(i)$ follows by $\e f(x+g)=\e f(-x-g)=\e f(-x+g).$ As for $(ii),$ note that
\begin{align*}
\e f_l(x+g)&=\int_{-\infty}^\infty f_l(u)\rho(u)du=\int_{-\infty}^\infty f_l(u)\rho(u,x)\exp\bigl(\frac{ux}{\sigma^2}\bigr)du,
\end{align*}
where $\rho$ is the probability density of $g$ and $\rho(u,x)=(2\pi\sigma^2)^{-1/2}\exp(-(u^2+x^2)/2\sigma^2).$ If we first split this integral into two parts $[0,\infty)$ and $(-\infty,0]$ and then using change of variables $v=-u$ and the assumption that $f_l$ is odd for the integral on $(-\infty,0]$, it follows that  
\begin{align*}
\e f_l(x+g)&=\int_{0}^\infty f_l(u)\rho(u,x)\exp\bigl(\frac{ux}{\sigma^2}\bigr)du-\int_{0}^{\infty} f_l(v)\rho(v,x)\exp\bigl(-\frac{vx}{\sigma^2}\bigr)dv\\
&=2\int_0^\infty f_l(u)\rho(u,x)\sinh\bigl(\frac{ux}{\sigma^2}\bigr)du.
\end{align*}
Since $\sinh(ux)\geq 0$ for $x,u\geq 0$ and $f_1\leq f_2,$ this equation gives $(ii).$

\smallskip

Next we prove $(iii).$ Let $g'$ be an independent copy of $g.$ Denote $g_x=x+g$ and $g_x'=x+g'.$ Using $\e W(x,g_x)=\e W(x,g_x')=1,$ we write
\begin{align*}
&\e f_1(g_x)f_2(g_x)W(x,g_x)
-\e f_1(g_x)W(x,g_x)\e f_2(g_x)W(x,g_x)\\
&=\e W(x,g_x)W(x,g_x')(f_1(g_x)-f_1(g_x'))(f_2(g_x)-f_2(g_x'))I(g\geq g').
\end{align*}
Applying change of variables $(s,t)=(g_x,g_x'),$ this integral equals
\begin{align}\label{lem2:proof:eq2}
\int_{\{s\geq t\}} K(x,s,t)\exp\biggl(-\frac{1}{2\sigma^2}((s-x)^2+(t-x)^2)\biggr)dsdt,
\end{align}
where 
$$
K(x,s,t):=\frac{1}{2\pi \sigma^2} W(x,s)W(x,t)(f_1(s)-f_1(t))(f_2(s)-f_2(t)).
$$
If $f_1,f_2$ are nondecreasing, this implies that $K$ is nonnegative for any $x$ and the first assertion follows immediately. Assume that $f_1,f_2$ satisfy \eqref{lem2:cond1}. Let us split the integral region of \eqref{lem2:proof:eq2} into two parts $\Omega_1=\{(s,t):s\geq t,|s|\leq |t|\}$ and $\Omega_2=\{(s,t):s\geq t,|s|\geq |t|\}.$ Using change of variables $(u,v)=(-t,-s)$ and the assumptions that $f_1$ is even, $f_2$ is odd and $W(x,y)$ is even in $y$, we obtain
\begin{align*}
&\int_{\Omega_2} K(x,s,t)\exp\biggl(-\frac{1}{2\sigma^2}((s-x)^2+(t-x)^2)\biggr)dsdt\\
&=-\int_{\Omega_1} K(x,u,v)\exp\biggl(-\frac{1}{2\sigma^2}((u+x)^2+(v+x)^2)\biggr)dudv
\end{align*}
and thus, \eqref{lem2:proof:eq2} becomes $\int_{\Omega_1}K(x,s,t)L(x,s,t)dsdt,$
where 
\begin{align*}
L(x,s,t)&:=\exp\biggl(-\frac{1}{2\sigma^2}((s-x)^2+(t-x)^2)\biggr)\\
&\quad-\exp\biggl(-\frac{1}{2\sigma^2}((s+x)^2+(t+x)^2)\biggr).
\end{align*}
Note that since $f_1',f_2'\geq 0$ for $x\geq 0$, it implies $K\geq 0$ on $\Omega_1.$ Also since $x\geq 0$ and $s+t\geq 0$ on $\Omega_1,$ this gives
$
(s+x)^2+(t+x)^2-(s-x)^2-(t-x)^2= 4x(s+t)\geq 0.
$
Together these ensure that $L\geq 0$ on $\Omega_1$ and so \eqref{lem2:eq1} holds for $x\geq 0.$
\end{proof}

\begin{proof}[Proof of \eqref{thm1:eq2} in Proposition \ref{thm1}] For notational convenience, we will denote $F_{\phi_l,m_l}$ by $F_l$ for $l=1,2$ and $F_{\phi,n}$ by $F_0.$ Note that these functions are clearly twice differentiable. Computing directly from \eqref{eq1} yields that
\begin{align}
\begin{split}\label{proof:eq1}
\partial_{x}F_l(x,t)&=\e \phi_l'(z_{x,t})W_l(x,t),
\end{split}\\
\begin{split}\label{proof:eq2}
\partial_{xx}F_l(x,t)&=\e \phi_l''(z_{x,t})W_l(x,t)\\
&\quad+m_l\bigl(\e \phi_l'(z_{x,t})^2W_l(x,t)-\left(\e \phi_l'(z_{x,t})W_l(x,t)\right)^2\bigr),
\end{split}
\end{align}
where $W_l(x,t):=\exp m_l\phi_l(z_{x,t})/\e \exp m_l\phi_l(z_{x,t}).$ Since $\phi_l$ satisfies \eqref{eq-9}, the above two equations imply $F_l(\cdot,t)$ satisfies \eqref{eq-9} too. So $F_l(\cdot,t)\in \mathcal{C}.$ From the right-hand side of \eqref{proof:eq2}, the first term is nonnegative since $\phi_l''\geq 0$, while the second term is nonnegative as well by noting $\e W_l(x,t)=1$ and using Jensen's inequality. Thus, $\partial_{xx}F_l\geq 0$ and $F_l$ is convex in $x$. Note that if $\phi_1\leq \phi_2,$ then the assumption $0\leq m_1\leq m_2$ combining with Jensen's inequality gives
\begin{align*}
F_1(x,t)&=\frac{1}{m_1}\log \e\exp m_1\phi_1(z_{x,t})\\
&\leq \frac{1}{m_1}\log \e\exp m_1\phi_2(z_{x,t})\\
&\leq  \frac{1}{m_2}\log \e\exp m_2\phi_2(z_{x,t})=F_2(x,t).
\end{align*}
Now, on the one hand, if $(\phi_1,\phi_2)\in\mathcal{F}_1,$ then $\phi_1',\phi_2'\geq 0$ and \eqref{proof:eq1} yields that $F_1(\cdot,t),F_2(\cdot,t)$ are nondecreasing. On the other hand, if $(\phi_1,\phi_2)\in\mathcal{F}_2,$ then $\exp m_1\phi_1,\exp m_2\phi_2$ are even and Lemma \ref{lem2}(i) shows that $F_1(\cdot,t),F_2(\cdot,t)$ are also even.

\smallskip

To finish the proof of \eqref{thm1:eq2}, it remains to show that $\partial_xF_1\leq \partial_xF_2$ if $(\phi_1,\phi_2)\in \mathcal{F}_1$ and $\partial_xF_1\leq \partial_xF_2$ for $x\geq 0$ if $(\phi_1,\phi_2)\in \mathcal{F}_2.$ Set $$
G(s)=\e \phi_1'(z_{x,t})W(s,x,z_{x,t}),
$$
where 
$$
W(s,x,y):=\frac{\exp((1-s)m_1\phi_1(y)+sm_2\phi_2(y))}{\e \exp((1-s)m_1\phi_1(z_{x,t})+sm_2\phi_2(z_{x,t}))}.
$$
Then
\begin{align*}
G'(s)&=\e f_1(z_{x,t})f_2(z_{x,t})W(s,x,z_{x,t})\\
&\qquad-\e f_1(z_{x,t})W(s,x,z_{x,t})\e f_2(z_{x,t})W(s,x,z_{x,t}),
\end{align*}
where $f_1:=\phi_1'$ and $f_2:=m_2\phi_2-m_1\phi_1.$ Suppose that $(\phi_1,\phi_2)\in\mathcal{F}_1$. Since $\phi_1$ is convex, $m_2\geq m_1> 0$ and $\phi_2'\geq \phi_1',$ it follows that $f_1$ and $f_2$ are nondecreasing and the first assertion of Lemma \ref{lem2}(iii) yields $G'(s)\geq 0.$ Consequently, using $\phi_1'\leq \phi_2'$ and \eqref{proof:eq1} gives $\partial_xF_1(x,t)=G(0)\leq G(1)\leq \partial_xF_2(x,t)$. Now, if $(\phi_1,\phi_2)\in \mathcal{F}_2$, then $f_1$ is odd, $f_2$ is even and both of them have nonnegative derivatives on $[0,\infty).$ The second assertion of Lemma \ref{lem2}(iii) implies $G'(s)\geq 0$ and thus, 
$
\partial_xF_1(x,t)=G(0)\leq G(1).
$
Note that $\phi_l'(\cdot)\exp m_2\phi_2(\cdot)/\e\exp m_2\phi_2(z_{x,t})$ is odd for $l=1,2$. Using $\phi_1'\leq \phi_2'$ on $[0,\infty)$ and Lemma \ref{lem2}(ii) give
$
G(1)=\e \phi_1'(z_{x,t})W_2(x,t)\leq \e\phi_2'(z_{x,t})W_2(x,t)=\partial_xF_2(x,t)
$
for all $x\geq 0.$ Thus, we conclude $\partial_xF_1(x,t)\leq \partial_xF_2(x,t)$ for all $x\geq 0.$ This finishes the proof of \eqref{thm1:eq2}.
\end{proof}

Next we turn to the proof of \eqref{thm1:eq1} in Proposition \ref{thm1} that relies on the following

\begin{lemma}[Maximum principle]\label{max}
Let $F$ be a twice differentiable function defined on $\mathbb{R}\times[0,1]$ and satisfy the statement:
\begin{align}
\label{lem5:S}
\begin{split}
&\mbox{whenever there is some $(x,t)$ satisfying $\partial_{xx}F(x,t)\leq 0$,}\\
&\mbox{ $\partial_xF(x,t)=0$ and $F(x,t)\geq 0$, then $\partial_tF(x,t)\leq 0.$}
\end{split}
\end{align}
If
\begin{align}
\begin{split}\label{lem5:eq1}
\limsup_{|x|\rightarrow\infty}\sup_{0\leq t\leq 1}F(x,t)&\leq 0,
\end{split}\\
\begin{split}\label{lem5:eq2}
F(\cdot,0)&\leq 0,
\end{split}
\end{align} 
then $F(\cdot,t)\leq 0$ for all $0\leq t\leq 1.$
\end{lemma}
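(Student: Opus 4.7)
The plan is to apply the classical parabolic maximum principle via the usual perturbation trick. Suppose for contradiction that $F(x_0,t_0)>0$ at some point $(x_0,t_0)\in\mathbb{R}\times[0,1]$; then for $\eps>0$ sufficiently small, the perturbed function $G_\eps(x,t):=F(x,t)-\eps t$ still satisfies $G_\eps(x_0,t_0)>0$, while it still inherits the decay \eqref{lem5:eq1} (since $G_\eps\le F$) and the initial bound $G_\eps(\cdot,0)\le 0$ from \eqref{lem5:eq2}.

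The first step is to show that $G_\eps$ attains its supremum over $\mathbb{R}\times[0,1]$. Using \eqref{lem5:eq1}, I can pick $R>0$ so that $\sup_{0\le t\le 1}F(x,t)<G_\eps(x_0,t_0)/2$ for all $|x|>R$. This confines the search for the supremum to the compact strip $[-R,R]\times[0,1]$, on which continuity of $G_\eps$ produces a maximizer $(x^*,t^*)$; and since $x^*$ lies in the interior of $\mathbb{R}$, it is also a global maximizer in $x$.

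The second step is to extract first- and second-order information at $(x^*,t^*)$. By \eqref{lem5:eq2}, $G_\eps(\cdot,0)=F(\cdot,0)\le 0<G_\eps(x_0,t_0)$, forcing $t^*>0$. Since $x^*$ is an interior critical point in $x$, $\partial_xF(x^*,t^*)=0$ and $\partial_{xx}F(x^*,t^*)\le 0$. For the time variable: if $t^*\in(0,1)$ then $\partial_tG_\eps(x^*,t^*)=0$, and if $t^*=1$ then the one-sided necessary condition gives $\partial_tG_\eps(x^*,1)\ge 0$; in either case $\partial_tF(x^*,t^*)\ge\eps>0$. Finally $F(x^*,t^*)=G_\eps(x^*,t^*)+\eps t^*>0$, so in particular $F(x^*,t^*)\ge 0$.

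The contradiction is then immediate: hypothesis \eqref{lem5:S} applies at $(x^*,t^*)$ and forces $\partial_tF(x^*,t^*)\le 0$, contradicting $\partial_tF(x^*,t^*)\ge\eps>0$. Hence $G_\eps\le 0$ on $\mathbb{R}\times[0,1]$, i.e.\ $F(x,t)\le\eps t$, and letting $\eps\downarrow 0$ yields the conclusion $F\le 0$. I do not expect any substantive obstacle; the only point requiring a little care is attaining the supremum, which is exactly what the spatial decay \eqref{lem5:eq1} is designed to provide, and the endpoint case $t^*=1$, handled by the one-sided derivative.
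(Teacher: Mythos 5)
Your proof is correct and follows essentially the same route as the paper: the perturbation $F-\eps t$, attainment of the maximum via the spatial decay \eqref{lem5:eq1}, exclusion of $t=0$ via \eqref{lem5:eq2}, and the contradiction between \eqref{lem5:S} and $\partial_tF\geq\eps$ at the maximizer. The only cosmetic difference is that you fix the contradiction point first and then choose $\eps$ small, whereas the paper fixes $\eps$ first and shows $F\leq\eps t$ for every $\eps$; these are equivalent.
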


\begin{proof}
For an arbitrary $\varepsilon>0$, set $F_\varepsilon(x,t)=F(x,t)-\varepsilon t$ for $(x,t)\in \mathbb{R}\times[0,1].$ We claim that $F_\varepsilon\leq 0$ on $\mathbb{R}\times[0,1].$ Assume on the contrary that $F_\varepsilon>0$ at $(x_0,t_0).$ From \eqref{lem5:eq1}, there is some $M>0$ such that
$F_\varepsilon(x,t)<F_\varepsilon(x_0,t_0)$ for all $(x,t)\in [-M,M]^c\times[0,1].$ So there exists some $(x_1,t_1)$ that realizes the maximum of $F_\varepsilon$ over $\mathbb{R}\times[0,1]$. Note that from \eqref{lem5:eq2}, $t_1>0.$ At $(x_1,t_1),$ one sees 
\begin{align*}
F-\varepsilon t_1&> 0,\\
\partial_{xx}F&=\partial_{xx}F_\varepsilon\leq 0,\\
\partial_xF&=\partial_xF_\varepsilon=0,\\
\partial_tF-\varepsilon&=\partial_tF_\varepsilon\geq 0.
\end{align*}
The first three lines and the statement \eqref{lem5:S} give $\partial_t F(x_1,t_1)\leq 0.$ However, the last line reads $\partial_tF(x_1,t_1)\geq \varepsilon$, a contradiction. This completes the proof of our claim and consequently, $F(x,t)\leq \varepsilon t$ for all $(x,t)\in\mathbb{R}\times[0,1]$ and $\varepsilon>0.$ Letting $\varepsilon$ tend to zero finishes our proof.   
\end{proof}

Recall $F_0,F_1,F_2$ from the first part of the proof of Proposition \ref{thm1}. As one shall see, we will define $F=F_0-\alpha F_1-(1-\alpha)F_2$ and   use Lemma \ref{max} to show $F\leq 0.$ For technical purposes, we will need two lemmas to simplify our argument. Since their proofs are seemingly independent of our main goal, we will postpone them to the appendix.

\begin{lemma}\label{lem-1}
Suppose that $(\phi_1,\phi_2)\in\mathcal{F}_i.$ Then there exists $\{(\phi_{1,r},\phi_{2,r})\}_{r\geq 1}\subseteq \mathcal{F}_i$ such that for $l=1,2,$ 
$\phi_{l,r}$ is linear on $(-\infty,-M_{r}]\cup[M_{r},\infty)$ for some $M_{r}> 0$, $\phi_{l,r}\leq \phi_{l}$ and $\phi_{l,r}\rightarrow\phi_l$ pointwise.
\end{lemma}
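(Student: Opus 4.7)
The plan is to construct $\phi_{l,r}$ by smoothly truncating the second derivative of $\phi_l$ outside a large compact interval. Fix $M_r\nearrow\infty$ and a cutoff $\chi_r\in C^\infty(\mathbb{R})$ with $0\le\chi_r\le 1$, $\chi_r\equiv 1$ on $[-M_r+1,M_r-1]$, $\chi_r\equiv 0$ off $[-M_r,M_r]$, chosen even when $(\phi_1,\phi_2)\in\mathcal{F}_1$. Set
$$\phi_{l,r}''(x):=\phi_l''(x)\chi_r(x),\qquad \phi_{l,r}(0)=\phi_l(0),\qquad \phi_{l,r}'(0)=\phi_l'(0),$$
and integrate twice. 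Then $\phi_{l,r}\in\mathcal{C}$ is convex with $\|\phi_{l,r}'\|_\infty\le\|\phi_l'\|_\infty$ and $\|\phi_{l,r}''\|_\infty\le\|\phi_l''\|_\infty$, agrees with $\phi_l$ on $[-M_r+1,M_r-1]$ (yielding pointwise convergence), and is linear outside $[-M_r,M_r]$. The bound $\phi_{l,r}\le\phi_l$ follows from $\phi_{l,r}''\le\phi_l''$ combined with matched initial data at $0$, integrated in each direction.

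For the pairwise comparisons, the main tool is the integration-by-parts identity
$$\phi_{l,r}'(x)=\chi_r(x)\phi_l'(x)+\int_0^x\bigl(-\chi_r'(y)\bigr)\phi_l'(y)\,dy,\qquad x\ge 0,$$
in which both $\chi_r$ and $-\chi_r'$ are nonnegative on $[0,\infty)$. Applied to $\phi_2'-\phi_1'$ and using $\phi_1'\le\phi_2'$ on $[0,\infty)$, it yields $\phi_{1,r}'\le\phi_{2,r}'$ on $[0,\infty)$; integrating and using $\phi_1(0)\le\phi_2(0)$ then gives $\phi_{1,r}\le\phi_{2,r}$ on $[0,\infty)$. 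In the $\mathcal{F}_1$ case, the evenness of $\chi_r$ forces $\phi_{l,r}$ to be even, which propagates the inequality to $(-\infty,0]$. In the $\mathcal{F}_2$ case the same identity applied to $\phi_l'$ itself shows $\phi_{l,r}$ is nondecreasing.

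The main obstacle I anticipate is verifying $\phi_{1,r}\le\phi_{2,r}$ on $(-\infty,-M_r+1)$ in the $\mathcal{F}_2$ case, where membership in $\mathcal{F}_2$ provides no sign information on $\phi_2'-\phi_1'$ off $[0,\infty)$ and the integration-by-parts identity above breaks down. My plan is to replace the symmetric second-derivative cutoff on the left by an asymmetric modification: smoothly glue $\phi_l$ on $[-M_r+1,M_r-1]$ to a common constant tail $\phi_{l,r}(x)\equiv c_l$ on $(-\infty,-M_r]$ via a $C^2$-transition on $[-M_r,-M_r+1]$ with slope increasing monotonically from $0$ at $-M_r$ to $\phi_l'(-M_r+1)$ at $-M_r+1$. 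Taking $c_l:=\inf\phi_l$ (finite when $\phi_l'(-\infty)=0$, which can be arranged by first subtracting a common affine piece compatible with both $\phi_l$) makes $\phi_{l,r}(x)\le\phi_l(x)$ on the tail automatic, and gives $\phi_{1,r}\le\phi_{2,r}$ there for free since $\phi_1(-\infty)\le\phi_2(-\infty)$ follows from $\phi_1\le\phi_2$. The remaining delicate point is to design the $C^2$-transition so that all four requirements---$\phi_{l,r}\in\mathcal{C}$, convexity and nondecreasingness, $\phi_{l,r}\le\phi_l$, and $\phi_{1,r}\le\phi_{2,r}$---survive on the transition window $[-M_r,-M_r+1]$.
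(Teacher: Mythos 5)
Your construction---truncating $\phi_l''$ outside a large compact set and integrating twice from matched data at the origin---is a genuinely different route from the paper's, which replaces $\phi_l$ by the piecewise-linear secant interpolant of $\phi_l-2/r$ on a fine partition of $[-r,r]$, extends it by the supporting lines of $\phi_l-2/r$ at $\pm r$, and then mollifies. Your version is cleaner where it works: it agrees with $\phi_l$ exactly on $[-M_r+1,M_r-1]$ (so pointwise convergence is immediate), and the even case is complete, since the even cutoff preserves evenness, $\phi_{l,r}''\le\phi_l''$ with matched data gives $\phi_{l,r}\le\phi_l$, and the integration-by-parts identity with $\chi_r,-\chi_r'\ge 0$ on $[0,\infty)$ gives the derivative comparison there. (Two small points: define $\phi_{l,r}'$ directly by the integration-by-parts formula, so that twice differentiability does not hinge on applying the fundamental theorem of calculus to the merely pointwise derivative $\phi_l''$; and note explicitly that $\chi_r$ is taken monotone on each transition band so that $-\chi_r'\ge0$ on $[0,\infty)$.)

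The gap you flag in the nondecreasing case is real, and the patch you sketch does not close it. First, $c_l=\inf\phi_l$ is finite only when $\phi_l'(-\infty)=0$, and the limits $\phi_1'(-\infty)$ and $\phi_2'(-\infty)$ need not coincide, so no single common affine subtraction normalizes both: for instance $\phi_2(x)=x+\sqrt{1+x^2}$ and $\phi_1(x)=\tfrac12\bigl(x+\sqrt{1+x^2}\bigr)+\tfrac{x}{4}-10$ form an admissible nondecreasing pair with $\phi_1'(-\infty)=\tfrac14$, $\phi_2'(-\infty)=0$ and $\inf\phi_1=-\infty$. Second, even when $\inf\phi_l$ is finite, the transition you describe cannot exist in general: a convex bridge on a unit window whose slope increases from $0$ to $\phi_l'(-M_r+1)$ has total rise at most $\phi_l'(-M_r+1)$, whereas the rise you require is $\phi_l(-M_r+1)-\inf\phi_l=\int_{-\infty}^{-M_r+1}\phi_l'$, which is typically much larger (take $\phi_l'(x)\sim (1+x^2)^{-1}$ on the left). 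So the left tail of the nondecreasing case remains open in your write-up. For what it is worth, you have isolated exactly the delicate point of the lemma: the paper's supporting-line tails automatically satisfy $s_{l,r}\le\phi_l-2/r$ (avoiding your obstructions (a) and (b)), but its asserted cross-comparison $s_{1,r}\le s_{2,r}$ on $(-\infty,-r]$ is itself problematic when $\phi_1'(-r)<\phi_2'(-r)$ (e.g.\ $\phi_1\equiv 0$, $\phi_2=\log(1+e^x)$, where $s_{2,r}$ has strictly positive slope and eventually drops below the constant $s_{1,r}$). A correct completion must produce left tails that are simultaneously below $\phi_l$, convex and nondecreasing when glued, and ordered as a pair; neither your constant-tail patch nor a naive use of the individual supporting lines delivers all of these at once, so this step still needs a new idea rather than just bookkeeping.
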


\begin{lemma}\label{lem--2}
Let $m,M\geq 0.$ Suppose that $\phi$ is a continuous function on $\mathbb{R}$ with $\phi(x)=Ax+B$ for $x\geq M$ and $\phi(x)=A'x+B'$ for $x\leq -M,$ where $A,A',B,B'$ are constants. Then we have
\begin{align}\label{eq--5}
(\e\exp m\phi(x+\sqrt{t}z))^{\frac{1}{m}}=\left\{
\begin{array}{ll}
O(x,t)\exp \bigl(Ax+B+\frac{A^2mt}{2}\bigr),\\
\\
O'(x,t)\exp \bigl(A'x+B'+\frac{(A')^2mt}{2}\bigr),
\end{array}\right.
\end{align}
where $\lim_{x\rightarrow\infty}O(x,t)=1$ and $\lim_{x\rightarrow-\infty}O'(x,t)=1$ uniformly over $0\leq t\leq 1$.
\end{lemma}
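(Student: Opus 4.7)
The plan is to isolate the part of the Gaussian expectation on which $\phi$ is exactly affine with slope $A$---this piece will produce the claimed leading-order asymptotic---and to show that the remaining contribution is negligible after dividing through by $\exp(mAx+mB+m^2A^2t/2)$. The second line of \eqref{eq--5} follows from the first applied to $\tilde\phi(\cdot):=\phi(-\cdot)$ (whose right-end parameters are $(-A',B')$) using that $z$ and $-z$ have the same law, so it suffices to treat $x\to\infty$. The case $t=0$ is immediate, since $\e\exp m\phi(x)=\exp m(Ax+B)$ for $x\geq M$, so one sets $O(x,0)=1$ and assumes $t\in(0,1]$ below.

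Writing $J(x,t):=\e\exp m\phi(x+\sqrt{t}z)$, I split $J=I_1+I_2$ according to whether $x+\sqrt{t}z\geq M$ or not. On the right affine region, substituting $\phi(x+\sqrt{t}z)=A(x+\sqrt{t}z)+B$ and completing the square in $z$ yields
\begin{align*}
I_1(x,t)=e^{mAx+mB+m^2A^2t/2}\,\Phi\bigl(\tfrac{x-M}{\sqrt{t}}+mA\sqrt{t}\bigr),
\end{align*}
where $\Phi$ is the standard Gaussian distribution function. For $x>M$ and $t\in(0,1]$ the argument of $\Phi$ is at least $(x-M)-m|A|$ (since $\sqrt{t}\leq 1$), so it tends to $+\infty$ uniformly in $t$, giving $I_1(x,t)\,e^{-mAx-mB-m^2A^2t/2}\to 1$ uniformly in $t$ as $x\to\infty$.

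For the exception region I split further into the bounded piece $\{|x+\sqrt{t}z|\leq M\}$ and the left affine piece $\{x+\sqrt{t}z\leq -M\}$. On the bounded piece $|\phi|\leq C_\phi:=\max_{|u|\leq M}|\phi(u)|$, and the $z$-measure of the set is at most $\Phi(-(x-M)/\sqrt{t})$, which for $x>M$ and $t\in(0,1]$ decays at least like $\exp(-(x-M)^2/2)$. On the left affine piece, substituting $\phi=A'(x+\sqrt{t}z)+B'$ and again completing the square produces $e^{mA'x+mB'+m^2(A')^2t/2}\,\Phi\bigl(-(x+M)/\sqrt{t}-mA'\sqrt{t}\bigr)$, whose $\Phi$-factor decays at least like $\exp(-(x+M)^2/2)$ on $t\in(0,1]$. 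In both cases this Gaussian-tail factor $\exp(-cx^2)$ crushes every exponential-in-$x$ prefactor as $x\to\infty$, so $I_2(x,t)\,e^{-mAx-mB-m^2A^2t/2}\to 0$ uniformly in $t\in(0,1]$.

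Combining the two estimates, $J(x,t)\,e^{-mAx-mB-m^2A^2t/2}\to 1$ uniformly in $t\in[0,1]$; raising to the $1/m$-th power then defines $O(x,t)$ and yields the first line of \eqref{eq--5}. The main subtlety will be the uniformity in $t$: $\sqrt{t}$ appears in two quite different roles---inside the denominator $(x-M)/\sqrt{t}$, where small $t$ only sharpens the Gaussian tail, and as a bounded additive mean shift $mA\sqrt{t}$---and once these two occurrences are tracked carefully, every bound above is valid uniformly on $(0,1]$ and the argument closes.
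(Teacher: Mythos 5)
Your proposal is correct and follows essentially the same route as the paper: isolate the affine region as the leading term and kill the remainder with Gaussian tail bounds, the uniformity in $t$ coming from the fact that small $t$ only sharpens those tails. The only cosmetic differences are that you evaluate the main term over the restricted region exactly via the Gaussian distribution function (the paper takes the unrestricted affine expectation as the main term and absorbs the discrepancy into the error, bounding it by Cauchy--Schwarz), and you obtain the $x\to-\infty$ case by reflecting $\phi$ rather than repeating the argument.
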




\begin{proof}[Proof of \eqref{thm1:eq1} in Proposition \ref{thm1}]
From Lemma \ref{lem-1}, there exists $(\phi_{1,r},\phi_{2,r})\in\mathcal{F}_i$ such that $\phi_{l,r}$ is linear on $(-\infty,-M_r]\cup [M_r,\infty)$, $\phi_{l,r}\leq \phi_l$ and $\phi_{l,r}\rightarrow \phi_l$ pointwise. If we could show that \eqref{thm1:eq1} holds for all $(\phi_{1,r},\phi_{2,r})$, the dominated convergence theorem implies that \eqref{thm1:eq1} is also valid for $(\phi_1,\phi_2).$ Thus, we may assume without loss of generality that $\phi_1,\phi_2$ are linear on $(-\infty,-M]\cup[M,\infty)$ for some $M>0.$ 

\smallskip
Define $
F=F_0-\alpha F_1-(1-\alpha)F_2.
$
Our goal is to show that $F\leq 0$ via Lemma \ref{max}. In order to do so, we now check that the conditions \eqref{lem5:eq1} and \eqref{lem5:eq2} are satisfied and the statement \eqref{lem5:S} holds true as follows. First, \eqref{lem5:eq2} follows immediately form the definitions of $F_0,F_1,F_2.$ Next, we proceed to check \eqref{lem5:eq1}. We show that $\limsup_{x\rightarrow\infty}\sup_{0\leq t\leq 1}F(x,t)\leq 0$ first. From the linearity of $\phi_l$ on $[M,\infty)$, write $\phi_{l}(x)=A_lx+B_l$ for all $x\geq M$ and some $A_l,B_l\in\mathbb{R}.$ Note that no matter $(\phi_1,\phi_2)\in\mathcal{F}_1$ or $\mathcal{F}_2$, we always have $\phi_2'\geq \phi_1'\geq 0$ on $[0,\infty)$. Thus, $A_2\geq A_1\geq 0.$ From Lemma \ref{lem--2},
\begin{align*}
\exp F_0(x,t)&=O(x,t)\exp \biggl(\alpha(A_1x+B_1)+(1-\alpha)(A_2x+B_1)\\
&\qquad\qquad+\frac{(\alpha A_1+(1-\alpha)A_2)^2nt}{2}\biggr),\\
\exp F_l(x,t)&=O_l(x,t)\exp \biggl(A_lx+B_l+\frac{A_l^2m_lt}{2}\biggr),
\end{align*}
where $O,O_1,O_2$ converge to $1$ uniformly over $t\in[0,1]$ as $x$ tends to infinity. So
\begin{align}
\begin{split}\label{eq.2}
\exp F(x,t)
&=\frac{O(x,t)}{O_1(x,t)^{\alpha}O_2(x,t)^{1-\alpha}}\\
&\exp\frac{1}{2}(n(\alpha A_1+(1-\alpha)A_2)^2-\alpha m_1 A_1^2-(1-\alpha)m_2A_2^2).
\end{split}
\end{align}
Here the exponent on the right-hand side can be factorized as
\begin{align}\label{eq.1}
\frac{1}{2}(A_1-A_2)(c_1A_1-c_2A_2)=\frac{1}{2}(A_1-A_2)((c_1-c_2)A_1+c_2(A_1-A_2)),
\end{align}
where $c_1:=\alpha(n\alpha-m_1)$ and $c_2:=(1-\alpha)(n(1-\alpha)-m_2).$ Observe that $c_1-c_2=2\alpha(1-\alpha)(m_2-m_1)$ and that $c_2\geq 0$ if and only if $m_1/m_2\geq (2-\alpha)/(1-\alpha).$ Since $m_2\geq m_1$ and $0\leq \alpha\leq 1,$ one sees that $c_1\geq c_2$ and $c_2<0.$ From the right-hand side of \eqref{eq.1}, these combining with $0\leq A_1\leq A_2$ give that $\eqref{eq.1}\leq 0$ and so $\limsup_{x\rightarrow \infty}\sup_{0\leq t\leq 1}F(x,t)\leq 0$. Next, we check $\limsup_{x\rightarrow -\infty}\sup_{0\leq t\leq 1}F(x,t)\leq 0.$ If $(\phi_1,\phi_2)\in \mathcal{F}_2,$ this follows immediately by the symmetry of $F$ in $x$. If $(\phi_1,\phi_2)\in \mathcal{F}_1,$ one may write $\phi_{l}(x)=A_l'x+B_l'$ for $x\leq -M$ and argue exactly in the same way as above to obtain \eqref{eq.2} with new parameters $A_l'$ and $B_l'$. In such case, note that again we have $0\leq A_1'\leq A_2'$ since $0\leq \phi_1'\leq \phi_2'$ on $(-\infty,-M]$. As a result, $\eqref{eq.1}\leq 0$ and $\limsup_{x\rightarrow -\infty}\sup_{0\leq t\leq 1}F(x,t)\leq 0.$ Thus, \eqref{lem5:eq1} holds. 

\smallskip

Finally, we claim that the statement \eqref{lem5:S} holds. Assume $\partial_{xx}F\leq 0$, $\partial_xF=0$ and $F\geq 0$ at some $(x_0,t_0).$ Using \eqref{max} and $\partial_xF(x_0,t_0)=0$, we have that at $(x_0,t_0),$
$
\partial_tF=\Delta_1/2+\Delta_2/2,
$
where $\Delta_1:=\partial_{xx}F$ and
\begin{align*}
\Delta_2&:=n\left(\alpha\partial_xF_1+(1-\alpha)\partial_xF_2\right)^2-\alpha m_1(\partial_xF_1)^2-(1-\alpha)m_2(\partial_xF_2)^2.
\end{align*}
Note that from assumption, $\Delta_1\leq 0.$ As for $\Delta_2,$ it can be factorized as
\begin{align}
\begin{split}
\label{eq.3}
\Delta_2&=\left(\partial_xF_1-\partial_xF_2\right)\left(c_1\partial_xF_1-c_2\partial_xF_2\right)\\
&=\left(\partial_xF_1-\partial_xF_2\right)\left((c_1-c_2)\partial_xF_1+c_2(\partial_xF_1-\partial_xF_2)\right),
\end{split}
\end{align}
where $c_1,c_2$ are defined in \eqref{eq.1}. Note that $c_1,c_2$ satisfy $c_2\leq c_1$ and $c_2<0.$ Also from \eqref{thm1:eq2}, we have that $0\leq \partial_{x}F_1\leq \partial_xF_2$ if $(\phi_1,\phi_2)\in\mathcal{F}_1$ and that $0\leq \partial_xF_1\leq \partial_xF_2$ for $x\geq 0$ and $\partial_xF_2\leq \partial_xF_1\leq 0$ for $x\leq 0$ if $(\phi_1,\phi_2)\in\mathcal{F}_2$. Thus, from the right-hand side of \eqref{eq.3}, combining these together yields $\Delta_2\leq 0$ and then $\partial_tF(x_0,t_0)\leq 0,$ which means that the statement \eqref{lem5:S} is satisfied. This completes our proof.
\end{proof}


\noindent{\bf \Large Appendix}

\begin{proof}[Proof of Lemma \ref{lem-1}]
For every $r\geq 1,$ let $T_r$ be the smallest integer such that 
\begin{align}
\label{eq--2}
\max\{\max\{|\phi_1'(x)|:x\in[-r,r]\},\max\{|\phi_2'(x)|:x\in[-r,r]\}\}\leq T_r.
\end{align}
Set $q_{r,p}=p(rT_r)^{-1}$ for $-r^2T_r\leq p\leq r^2T_r.$ In other words, $q_{r,p}$'s form a regular partition of $[-r,r]$ with $2r^2T_r+1$ points. Let $s_{l,r}$ be a continuous piecewise linear function on $\mathbb{R}$ defined as
\begin{align}\label{eq--3}
s_{l,r}(x)=\left\{
\begin{array}{ll}
\phi_l(r)-\frac{2}{r}+\phi_{l}'(r)(x-r),&\mbox{if $x\geq r$},\\
\\
\phi_l(q_{r,p})-\frac{2}{r}+\frac{\phi_{l}(q_{r,p+1})-\phi_l(q_{r,p})}{q_{r,p+1}-q_{p}}(x-q_{r,p}),&\mbox{if $q_{r,p}<x\leq q_{r,p+1}$},\\
\\
\phi_l(-r)-\frac{2}{r}+\phi_l'(-r)(x+r),&\mbox{if $x\leq -r$}.
\end{array}\right.
\end{align}
It is easy to check that $s_{l,r}$ is convex and $s_{1,r}\leq s_{2,r}$. In addition, if $(\phi_1,\phi_2)\in\mathcal{F}_1,$ then $s_{1,r},s_{2,r}$ are nondecreasing and $s_{1,r}'\leq s_{2,r}'$ on $\mathbb{R}$ except at $q_{r,p}$'s; if $(\phi_1,\phi_2)\in\mathcal{F}_2,$ then $s_{1,r},s_{2,r}$ are even and $s_{1,r}'\leq s_{2,r}'$ on $[0,\infty)$ except at $q_{r,p}$'s. Note that since the first and third equations on the right-hand side of \eqref{eq--3} are the supporting lines of $\phi_l-2/r$ at $r$ and $-r$, respectively, it follows that $\phi_{l}-2/r\geq s_{l,r}$ for $|x|\geq r.$ On the other hand, if $x\in(q_{r,p},q_{r,p+1}]$ for some $-r^2T_r\leq p\leq r^2T_r-1,$ then by the convexity of $\phi_l,$
\begin{align}
\begin{split}
\label{eq.5}
\phi_l(x)-s_{l,r}(x)&=\phi_l(x)-\biggl(\phi_l(q_{r,p})-\frac{2}{r}+\frac{\phi_{l}(q_{r,p+1})-\phi_l(q_{r,p})}{q_{r,p+1}-q_{r,p}}(x-q_{r,p})\biggr)\\
&=(x-q_{r,p})\biggl(\frac{\phi_l(x)-\phi_l(q_{r,p})}{x-q_{r,p}}-\frac{\phi_{l}(q_{r,p+1})-\phi_l(q_{r,p})}{q_{r,p+1}-q_{r,p}}\biggr)+\frac{2}{r}\\
&\geq (x-q_{r,p})(\phi_l'(q_{r,p})-\phi_l'(q_{r,p+1}))+\frac{2}{r}\\
&\geq -(q_{r,p+1}-q_{r,p})T_r+\frac{2}{r}\\
&\geq \frac{1}{r}.
\end{split}
\end{align}
To sum up, $\phi_l\geq s_{l,r}.$ Now, we pick a symmetric mollifier function $\eta$ on $\mathbb{R}$ that is positive on $(-1,1)$ and supported on $[-1,1]$. Define $\eta_r(x)=\varepsilon_r^{-1}\eta(x/\varepsilon_r)$ for $\varepsilon_r:=(2rT_r)^{-1}.$ We set 
\begin{align}
\label{eq-5}
\phi_{l,r}(x):=\int \eta_{r}(x-u)s_{l,r}(u)du=\int\eta_{r}(u)s_{l,r}(x-u)du.
\end{align}
With this definition, one may see that $\phi_{l,r}$'s are twice differentiable functions and clearly they satisfy \eqref{eq-9}.

\smallskip

Now, we check that $(\phi_{1,r},\phi_{2,r})\in\mathcal{F}_i.$ Let us consider $i=1$ first. Since $s_{1,r}$ and $s_{2,r}$ are convex, nondecreasing and $s_{1,r}\leq s_{2,r}$, using the equation on the right-hand side of \eqref{eq-5}, it is easy to see that these properties are also preserved for $\phi_{1,r},\phi_{2,r}$. Note that from our construction of $s_{l,r}$, $|s_{l,r}(x)-s_{l,r}(y)|\leq T_r|x-y|$. This together with \eqref{eq.5} and $\mbox{supp}(\eta_{r})=[-\varepsilon_r,\varepsilon_r]$ yields
\begin{align}
\begin{split}\label{eq--1}
\phi_{l}(x)-\phi_{l,r}(x)&=\phi_l(x)-s_{l,r}(x)+s_{l,r}(x)-\phi_{l,r}(x)\\
&=\phi_l(x)-s_{l,r}(x)+\int \eta_{r}(u)(s_{l,r}(x)-s_{l,r}(x-u))du\\
&\geq \frac{1}{r}-T_r\int \eta_{r}(u)|u|du\\
&\geq \frac{1}{r}-T_r\varepsilon_r\int \eta_{r}(u)du\\
&\geq \frac{1}{2r}.
\end{split}
\end{align}
So $\phi_l\geq \phi_{l,r}.$ Finally, it remains to check that $\phi_{1,r}'\leq \phi_{2,r}'.$ To see this, observe that $s_{l,r}$ is differentiable everywhere except at $q_{r,p}$'s and is Lipschitz on $\mathbb{R}.$ From the dominated convergence theorem,
\begin{align}\label{eq--4}
\phi_{1,r}'(x)=\int \eta_{r}(u)s_{1,r}'(x-u)du\leq \int \eta_{r}(u)s_{2,r}'(x-u)du=\phi_{2,r}'(x)
\end{align}
for every $x\in\mathbb{R}.$ So $\phi_{1,r}'\leq \phi_{{2,r}}'$ on $\mathbb{R}.$ This completes our proof of $(\phi_{1,r},\phi_{2,r})\in\mathcal{F}_1.$ As for the case that $(\phi_{1,r},\phi_{2,r})\in\mathcal{F}_2$ by assuming $(\phi_1,\phi_2)\in\mathcal{F}_2$, to show that $\phi_{1,r},\phi_{2,r}$ are convex and $\phi_{1,r}\leq \phi_{2,r}$, it can be treated essentially in the same way as above. Moreover, using the additional assumption that $\eta$ is symmetric gives that $\phi_{1,r},\phi_{2,r}$ are even. As for $\phi_{1,r}'\leq \phi_{2,r}'$ on $[0,\infty),$ note that $s_{2,r}'\geq s_{1,r}'$ on $[0,\infty)$ and $\eta_r$ is supported on $[-\varepsilon_r,\varepsilon_r]$. If $x\geq \varepsilon_r,$ then we also have \eqref{eq--4}; if $x\in[0,\varepsilon_r),$ noting that $\varepsilon_r\leq q_{r,1}$ and $s_{r,l}(u)=A_l|u|+B_l$ with $A_l=(\phi_{l}(q_{r,1})-\phi_l(q_{r,0}))(q_{r,1}-q_{r,0})^{-1}$ and $B_l=\phi_l(0),$ we compute
\begin{align*}
\phi_{l,r}'(x)&=\int \eta_{r}(u)s_{l,r}'(x-u)du\\
&=A_l\biggl(\int_{-\varepsilon_r}^x \eta_{r}(u)du-\int_{x}^{\varepsilon_r} \eta_{r}(u)du\biggr)\\
&=A_l\biggl(1-2\int_{x}^{\varepsilon_r} \eta_{r}(u)du\biggr).
\end{align*}
Since $\int_{x}^{\varepsilon_r} \eta_{r}(u)du\leq 1/2$ and $A_1\leq A_2,$ they imply $\phi_{1,r}'(x)\leq \phi_{2,r}'(x).$ Hence, $\phi_{1,r}'\leq \phi_{2,r}'$ on $[0,\infty).$ This completes the proof for the second case.

Next, we show that $\phi_{l,r}$ is linear on $(-\infty,-M_{r}]\cup[M_{r},\infty)$ for some $M_{r}>0$, $\phi_{l,r}\leq \phi_l$ and $\phi_{l,r}\rightarrow\phi_l.$ From \eqref{eq--1}, $\phi_{l,r}\leq \phi_l$ holds. Also, from the second equality of \eqref{eq--1}, one can further see that
\begin{align*}
|\phi_l(x)-\phi_{l,r}(x)|&\leq \phi_l(x)-s_{l,r}(x)+J_{l,r}\int \eta_{r}(u)|u|du\\
&\leq \phi_l(x)-s_{l,r}(x)+\frac{1}{2r}.
\end{align*}
Since $s_{l,r}\uparrow \phi_l,$ it follows that $\phi_{l,r}\rightarrow\phi_l.$ Finally, recalling the definition of $s_{l,r}$ from \eqref{eq--3}, if we pick $M_{r}=r+\varepsilon_{r},$ then
\begin{align*}
&\phi_{l,r}(x)\\
&=\int\eta_{r}(u)s_{l,r}(x-u)du\\
&=
\left\{
\begin{array}{ll}
\bigl(\phi_{l}(r)-\frac{2}{r}-\phi_l'(r)\int\eta_{r}(u)udu\bigr)+\bigl(\phi_l'(r)\int \eta_r(u)du\bigr)x,&\mbox{if $x\geq M_r$},\\
\\
\bigl(\phi_{l}(-n)-\frac{2}{r}-\phi_l'(-r)\int\eta_{r}(u)udu\bigr)+\bigl(\phi_l'(-r)\int \eta_r(u)du\bigr)x,&\mbox{if $x\leq -M_r$},
\end{array}
\right.
\end{align*}
which means that $\phi_{l,r}$ is linear on $(-\infty,-M_{r}]\cup[M_{r},\infty).$ This completes our proof.
\end{proof}

\begin{proof}[Proof of Lemma \ref{lem--2}] We will only show the first equality of the right-hand side of \eqref{eq--5}. For the second, it can be derived exactly in the same way. For notational convenience, we will use $C$ to denote a positive constant independent of $x,t$. Let us emphasize that it might be different from each occurrence. Denote $z_{x,t}=x+\sqrt{t}z.$ From the given assumption, we write
\begin{align}
\begin{split}
\label{eq-7}
\e\exp m\phi(z_{x,t})&=I_1(x,t)+I_2(x,t)+I_3(x,t)
\end{split}
\end{align}
where
\begin{align*}
I_1(x,t)&:=\e \exp m(Az_{x,t}+B),\\
I_2(x,t)&:=\e[\exp m\phi(z_{x,t});|z_{x,t}|\leq M],\\
I_3(x,t)&:=\e[\exp m(A'z_{x,t}+B');z_{x,t}\leq -M]-\e[\exp m(Az_{x,t}+B);z_{x,t}\leq M].
\end{align*}
Since $\phi$ is continuous, $|I_2|\leq C\p(|z_{x,t}|\leq M)$. From the Cauchy-Schwartz inequality,
\begin{align*}
|I_3|&\leq (\e \exp 2m(A'z_{x,t}+B'))^{\frac{1}{2}}\p(z_{x,t}\leq -M)^{\frac{1}{2}}\\
&\quad+(\e \exp 2m(Az_{x,t}+B))^{\frac{1}{2}}\p(z_{x,t}\leq M)^{\frac{1}{2}}\\
&=\exp m(A'x+B'+2m(A')^2t)\p(z_{x,t}\leq -M)^{\frac{1}{2}}\\
&\quad+\exp m(Ax+B+2mA^2t)\p(z_{x,t}\leq M)^{\frac{1}{2}}\\
&\leq C\exp (Cx)\p(z_{x,t}\leq M)^{\frac{1}{2}},
\end{align*}
where we used the fact that $\e \exp \beta z=\exp \beta^2/2$ for any $\beta\in\mathbb{R}.$
Since $\p(z\geq L)\leq \exp(-L^2/2)$ whenever $L$ is sufficiently large, we have
\begin{align}
\label{eq-8}
\p(z_{x,t}\leq M)&=\p(x-M\leq \sqrt{t}z)\leq \exp\biggl(-\frac{(x-M)^2}{2t}\biggr)\leq C\exp\biggl(-\frac{x^2}{Ct}\biggr)
\end{align}
when $x$ is sufficiently large. Therefore, $|I_2|+|I_3|\leq C\exp(-x^2/Ct).$ Finally, noting that $I_1=\exp m(Ax+B+A^2mt/2)$ gives 
$(|I_2|+|I_3|)/I_1\leq C\exp(-x^2/Ct+m(Ax+B)+A^2m^2t/2).$ Since the exponent is dominated by $x^2/t,$ we get again 
$(|I_2|+|I_3|)/I_1\leq C\exp(-x^2/Ct).$ Hence, if we define $O=(1+(I_2+I_3)/I_1)^{1/m},$ then $O$ converges to $1$ uniformly over $0\leq t\leq 1$ as $x$ tends to infinity and $(\e\exp m\phi(z_{x,t}))^{1/m}=O(x,t)I_1(x,t)^{1/m}=O(x,t)\exp m(Ax+B+A^2mt/2).$ This gives the announced result.
\end{proof}

\end{document}